\numberwithin{equation}{section}
\theoremstyle{plain}
\newtheorem{theorem}{Theorem}[section]
\newtheorem{prop}[theorem]{Proposition}
\newtheorem{corollary}[theorem]{Corollary}
\theoremstyle{definition}
\theoremstyle{remark}
\newtheorem*{remark}{Remark}
\theoremstyle{remarks}
\newcommand{\R}{\mathbb{R}}
\DeclareFontFamily{U}{mathx}{\hyphenchar\font45}%
   \DeclareFontShape{U}{mathx}{m}{n}{<->mathx10}{}%
   \DeclareSymbolFont{mathx}{U}{mathx}{m}{n}%
   \DeclareMathAccent{\widebar}{0}{mathx}{"73}%
  \newcommand{\widebar}[1]{\overline{#1}}%
\newcommand{\la}{\lambda}
\newcommand{\e}{\varepsilon}
   \def\MR#1{}
\begin{document}

\author{Yannick Sire}
\address{Department of Mathematics\\                Johns Hopkins University\\                Baltimore, MD 21218}
\email{ysire1@jhu.edu}

\author{Christopher D. Sogge}
\address{Department of Mathematics\\                Johns Hopkins University\\                Baltimore, MD 21218}
\email{sogge@jhu.edu}

\author{Chengbo Wang}
\address{School of Mathematical Sciences\\                Zhejiang University\\                Hangzhou 310027, China}
\email{wangcbo@zju.edu.cn}

\author{Junyong Zhang}
\address{Department of Mathematics\\  Beijing Institute of Technology\\ Beijing 100081, China}
\email{zhang\_junyong@bit.edu.cn}

\title[Reversed Strichartz estimates]{Reversed Strichartz estimates for wave on non-trapping asymptotically hyperbolic manifolds and applications}

\begin{abstract}
We provide reversed Strichartz estimates for the shifted wave equations  on non-trapping asymptotically hyperbolic manifolds using cluster estimates for spectral projectors proved previously in such generality. 
As a consequence, we solve a problem left open in \cite{SSWZ} about the endpoint case for global well-posedness of nonlinear wave equations. We also provide estimates in this context for the maximal wave operator. 
\end{abstract}

\maketitle
\tableofcontents

\section{Introduction and main results}

The goal of this note is to obtain new reversed Strichartz estimates for the (shifted) wave operator on some complete manifolds with bounded geometry. We are concerned  in particular with asymptotically hyperbolic manifolds. The argument builds on $L^p$-estimates for the spectral projectors (on the continuous spectrum). We draw also two consequences of the reverse Strichartz estimates: first, the endpoint version of {\sl classical} Strichartz estimates and an application to global well-posedness for nonlinear wave equations on those manifolds; second, a sharp (in $L^p$ spaces) estimate of the maximal function for the wave operator.

We work on an $n+1$-dimensional complete non-compact Riemannian manifold $(M^\circ, g)$ where $n\geq 1$ and the metric $g$ is an asymptotically hyperbolic metric. 
This setting is the same as in Chen-Hassell
\cite{CH1,chenHassell}, Mazzeo \cite{M} and Mazzeo-Melrose\cite{MM}.  Let $x$ be a boundary defining function for the compactification $M$ of $M^\circ$.  We say a metric $g$ is conformally compact if $x^2g$ is
a Riemannian metric and extends smoothly up to the boundary $\partial M$. Mazzeo \cite{M} showed that its sectional curvature tends to $-|dx|^2_{x^2g}$ as $x\to0$; In particular, if the limit is such that $-|dx|^2_{x^2g}=-1$, we
say  that the conformally compact metric $g$ is asymptotically hyperbolic. More specifically, let $y=(y_1,\cdots,y_{n})$ be local coordinates on $Y=\partial
M$, and $(x,y)$ be the local coordinates on $M$ near $\partial M$; the metric $g$ in a
collar neighborhood $[0,\epsilon)_x\times \partial M$ takes
the form 
\begin{equation}\label{metric}
g=\frac{dx^2}{x^2}+\frac{h(x,y)}{x^2}=\frac{dx^2}{x^2}+\frac{\sum
h_{jk}(x,y)dy^jdy^k}{x^2},
\end{equation}
where $x\in C^{\infty}(M)$ is a boundary defining function for
$\partial M$ and $h$ is a smooth family of metrics on $Y=\partial M$.  In addition, if every geodesic
in $M$ reaches $\partial M$ both forwards and backwards, we say $M$ is
nontrapping.  The Poincar\'e disc $(\mathbb{B}^{n+1}, g)$ is a typical example of such manifold.  Indeed, considering the ball $\mathbb{B}^{n+1}=\{z\in\R^{n+1}: |z|<1\}$
endowed with the metric
\begin{equation}
g=\frac{4dz^2}{(1-|z|^2)^2},
\end{equation}
one can take $x=(1-|z|)(1+|z|)^{-1}$ as the boundary defining function and $\omega$ the coordinates on $\mathbb{S}^{n}$. Then the Poincar\'e metric 
takes the form $$g=\frac{dx^2}{x^2}+\frac{\frac14(1-x^2)^2 d\omega^2}{x^2},$$
where $d\omega^2$ is the standard metric on the sphere $\mathbb{S}^{n}$. 
\vspace{0.2cm}

Let $H=-\Delta_g-\frac{n^2}4$ where $\Delta_g$ is the Laplace-Beltrami operator on $(M^\circ,g)$. 
We recall the following result  about estimates for the spectral measure and the spectral projectors by Chen and Hassell in \cite[Theorem 1.6]{chenHassell}. We denote by $dE(\lambda)$ the spectral measure associated with the operator $H$, such that for every $F$ a Borel function on $\R$, we have 

$$
F(\sqrt H)=\int_{\R} F(\lambda)dE(\lambda)
$$
with domain 
$$
\left \{ \psi: \int_{\R}|F(\lambda)^2|\,d\langle \psi, E(\lambda)\psi \rangle  <\infty \right \}.  
$$
\begin{theorem}\label{specMeasure}
Let $n\ge 1$.
Suppose $(M^\circ,g)$ is a non trapping asymptotically hyperbolic manifold of dimension $n+1$. 
Assume that there 
has no pure point eigenvalue and has no resonance at the bottom of the continuous spectrum of  $H=-\Delta_g-\frac{n^2}4$.
 Then for some constant $C$, we have: for $\lambda \leq 1$
\begin{equation}\label{lowFreq}
\|dE(\lambda)\|_{p \to p'} \leq C \lambda^2,\,\,\, 1\leq p <2.
\end{equation}
For $\lambda \geq 1$, we have 
\begin{equation}\label{highFreq}
\|dE(\lambda)\|_{p \to p'} \leq \left \{
\begin{array}{c}
C \lambda^{(n+1)(1/p-1/p')-1},\,\,\, 1\leq p \leq \frac{2(n+2)}{n+4},\\
C \lambda^{n(1/p-1/2)},\,\,\,\, \frac{2(n+2)}{n+4} \leq p <2.\\
\end{array} \right.
\end{equation}

\end{theorem}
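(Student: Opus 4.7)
Since this is essentially the Chen--Hassell result stated as \thmref{specMeasure}, the plan is to recall the strategy one would use to establish it. The starting point is Stone's formula, which expresses the spectral measure as the jump of the resolvent across the continuous spectrum:
\begin{equation*}
\frac{dE(\lambda)}{d\lambda}=\frac{\lambda}{\pi\I}\bigl(R(\lambda+\I 0)-R(\lambda-\I 0)\bigr),\qquad R(z)=(H-z^2)^{-1}.
\end{equation*}
Thus $L^p\to L^{p'}$ bounds on $dE(\lambda)$ reduce to uniform (in $\lambda$) mapping properties of the boundary values of $R(\lambda\pm\I 0)$ between the same spaces. I would then separate the analysis into a high-frequency regime $\lambda\geq 1$ and a low-frequency regime $\lambda\leq 1$, since the structure of $R(\lambda\pm\I 0)$ is genuinely different in these two ranges on an asymptotically hyperbolic manifold.

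\smallskip

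For the high-frequency regime I would construct a parametrix for $R(\lambda\pm\I 0)$ using a Fourier integral/Hadamard type representation adapted to the asymptotically hyperbolic geometry (as in Melrose--S\'a Barreto--Vasy and the framework used by Chen--Hassell). After a suitable microlocal decomposition, one writes
\begin{equation*}
dE(\lambda)=\sum_j U_j(\lambda)\,a_j(\lambda)\,U_j(\lambda)^{*}+(\text{negligible remainder}),
\end{equation*}
where each $U_j(\lambda)$ is an oscillatory integral operator whose phase is essentially a piece of the Riemannian distance (shifted by $n/2$ in the exponential), and the remainder is controlled by the non-trapping assumption through propagation of singularities. The $p\to p'$ bound for each piece is obtained by a Stein--Tomas/Stein's interpolation argument applied to the oscillatory kernel: one combines the trivial $L^2\to L^2$ bound coming from unitarity with the dispersive kernel bound (size $\lambda^{n/2}$ on scale $\lambda^{-1}$). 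The resulting estimate changes form precisely at the Stein--Tomas exponent $p=2(n+2)/(n+4)$, which gives the two regimes \eqref{highFreq}; below this exponent the dispersive decay dominates and yields $\lambda^{(n+1)(1/p-1/p')-1}$, above it the kernel estimate is saturated only on a slab and yields $\lambda^{n(1/p-1/2)}$.

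\smallskip

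For the low-frequency regime $\lambda\leq 1$ I would use the Mazzeo--Melrose zero calculus to construct the resolvent $R(\lambda\pm\I 0)$ as a polyhomogeneous conormal distribution on the resolved 0-double space. The hypothesis that $H$ has no $L^2$ eigenvalue and no resonance at the bottom of the continuous spectrum guarantees that the resolvent extends continuously up to $\lambda=0$ with a controlled principal behavior; in particular the jump $R(\lambda+\I 0)-R(\lambda-\I 0)$ vanishes to first order in $\lambda$, gaining one factor of $\lambda$ beyond what the trivial density of states already provides. Together with the hyperbolic $L^p\to L^{p'}$ boundedness of $R(0)$ (which is essentially the endpoint Hardy--Littlewood--Sobolev inequality on hyperbolic space, reflecting exponential decay of the Green's function), this gives the $\lambda^2$ gain claimed in \eqref{lowFreq}, uniformly in $1\leq p<2$.

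\smallskip

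The main obstacle I foresee is the gluing between the microlocal parametrix valid away from the boundary and the 0-calculus parametrix valid near the boundary, at both frequency regimes: one needs precise off-diagonal decay for the remainders, and this is exactly where the non-trapping hypothesis is used to run a propagation-of-singularities argument and absorb the remainders. The Stein--Tomas threshold $p=2(n+2)/(n+4)$ is not a technical artifact but a genuine transition, and verifying that the parametrix pieces respect it is the most delicate part of the argument; everything else is bookkeeping.
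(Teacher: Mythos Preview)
The paper does not prove this theorem at all: it is quoted verbatim as \cite[Theorem~1.6]{chenHassell} and used as a black box, with only the trivial integration step in Corollary~\ref{specBound} carried out. So there is no ``paper's own proof'' to compare your proposal against.

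Your sketch is a fair high-level summary of the Chen--Hassell strategy (Stone's formula, semiclassical parametrix for the resolvent at high energy via the Melrose--S\'a~Barreto--Vasy framework, $TT^*$/Stein--Tomas interpolation producing the break at $p=\tfrac{2(n+2)}{n+4}$, and the Mazzeo--Melrose $0$-calculus together with the no-resonance assumption at low energy). As a reading guide to \cite{CH1,chenHassell} it is reasonable, though of course the actual content lies in the parametrix constructions and kernel bounds that you only name. For the purposes of this paper, however, a one-line citation to \cite{chenHassell} is all that is required; your sketch goes well beyond what the paper itself provides.
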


\begin{corollary}\label{specBound}
Suppose $(M^\circ,g)$ is a non trapping asymptotically hyperbolic manifold of dimension $n+1$ with $n\geq1$. 
Assume that there 
has no pure point eigenvalue and has no resonance at the bottom of the continuous spectrum of  $H$.
 Let $\delta\in (0, 1]$ and define the spectral projector on the frequencies $[k,k+\delta]$
\begin{equation}\label{c-kdelta}
\chi_{k}^\delta f=\int_0^\infty \chi_{[k,k+\delta]}(\lambda) dE(\lambda) f.
\end{equation}
Then for $k \geq 0$
\begin{equation}\label{highFreq2'}
\|\chi_{k}^{\delta}\|_{p \to p'} \leq 
C\delta (\delta+k)^2 (1+k)^{(n+1)(1/p-1/p')-3},\,\,\, 1\leq p \leq \frac{2(n+2)}{n+4}.
\end{equation}
In particular, we have
\begin{equation}\label{highFreq2}
\|\chi_{k}^{\delta}\|_{p \to p'} \leq 
C\delta (\delta+k)^{(n+1)(1/p-1/p')-1},\,\,\, \max\Big\{1,\frac{2(n+1)}{n+4}\Big\}\leq p \leq \frac{2(n+2)}{n+4}\ .
\end{equation}
\end{corollary}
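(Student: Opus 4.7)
The plan is to derive the bounds on $\chi_k^\delta$ directly from the spectral measure bounds in Theorem \ref{specMeasure} by integrating over $\lambda \in [k,k+\delta]$. Since $\chi_k^\delta$ is the spectral projector on $[k,k+\delta]$, the operator-norm triangle inequality yields
\begin{equation*}
\|\chi_k^\delta\|_{p\to p'} \;\le\; \int_k^{k+\delta} \|dE(\lambda)\|_{p\to p'}\,d\lambda,
\end{equation*}
and the task reduces to a careful bookkeeping of the two regimes $\lambda\le 1$ and $\lambda\ge 1$ from \eqref{lowFreq}--\eqref{highFreq}.

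For the first assertion \eqref{highFreq2'}, I split into cases according to the position of $[k,k+\delta]$ relative to $\lambda=1$. When $k\ge 1$ the entire interval lies in the high-frequency regime, so using \eqref{highFreq} with $1\le p \le \tfrac{2(n+2)}{n+4}$ and the fact that $\lambda\sim k$ on $[k,k+\delta]$ (since $\delta\le 1$), the integral is bounded by $C\delta\,k^{(n+1)(1/p-1/p')-1}$; this equals $C\delta\,(\delta+k)^2(1+k)^{(n+1)(1/p-1/p')-3}$ up to constants because $1+k\sim \delta+k\sim k$. When $k+\delta \le 1$ the interval lies in the low-frequency regime, and \eqref{lowFreq} gives $\int_k^{k+\delta}\lambda^2\,d\lambda \lesssim \delta(\delta+k)^2$, which matches the target since $1+k\sim 1$. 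The remaining transitional case $k<1<k+\delta$ is a small sliver (only possible when $k\in (1-\delta,1]$, so $k\sim 1$) where one splits the integral at $\lambda=1$, bounding the low-frequency portion by $\int_k^1 \lambda^2\,d\lambda\le \delta$ and the high-frequency portion by $\int_1^{k+\delta} \lambda^{(n+1)(1/p-1/p')-1}\,d\lambda\lesssim \delta$, which again fits in the target since all factors $(1+k)$, $(\delta+k)$ are of order $1$.

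For the sharpened range \eqref{highFreq2}, when $k\ge 1$ the two bounds coincide because $(\delta+k)\sim (1+k)$. When $\delta+k\le 1$, one compares the exponent $2$ (coming from \eqref{highFreq2'}) to the target exponent $(n+1)(1/p-1/p')-1$ on a factor $(\delta+k)\le 1$. The bound improves precisely when $(n+1)(1/p-1/p')-1\le 2$, i.e. $p\ge \tfrac{2(n+1)}{n+4}$, which is exactly the lower endpoint in \eqref{highFreq2}. The transitional range is handled as before, with $\delta+k\sim 1$.

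The argument itself is a short and routine consequence of Theorem \ref{specMeasure}; the only point requiring care, which I would treat as the main (mild) obstacle, is the gluing of the low- and high-frequency regimes near $\lambda=1$ and the verification that the two resulting expressions can be jointly captured by the product form $\delta(\delta+k)^2(1+k)^{(n+1)(1/p-1/p')-3}$ in \eqref{highFreq2'}, and analogously by the cleaner form in \eqref{highFreq2} once the additional restriction $p\ge \tfrac{2(n+1)}{n+4}$ is imposed.
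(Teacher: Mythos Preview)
Your proposal is correct and follows exactly the same approach as the paper: the paper's own proof is a single sentence, ``By Theorem~\ref{specMeasure} and integrating the spectral measure on $[k,k+\delta]$, one gets the desired result,'' and your write-up simply supplies the case analysis (low-frequency, high-frequency, and transitional around $\lambda=1$) that this sentence leaves to the reader. The verification that the extra restriction $p\ge \tfrac{2(n+1)}{n+4}$ is precisely what is needed to pass from \eqref{highFreq2'} to \eqref{highFreq2} when $\delta+k\le 1$ is also correct.
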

\begin{proof}
By Theorem \ref{specMeasure} and integrating the spectral measure on $[k,k+\delta]$, one gets the desired result. 
\end{proof}

\begin{remark}
In the case of high frequencies $k \geq 1$, the previous theorem is known to hold on asymptotically conic manifold and even on manifolds with bounded geometry (see \cite{GHS}). In particular, asymptotically hyperbolic manifolds are examples of manifolds with bounded geometry in the following sense: suppose $(M,g)$ be a complete Riemannian manifold of dimension $n+1$ with $n\geq 2$. We assume that $M$ has {\sl $C^\infty$ bounded geometry}, i.e. the local injectivity radius $M$ has a positive lower bound $\varepsilon$,  the metric tensor $g_{ij}$ , expressed in normal coordinates in the ball of radius $\varepsilon/2$ around any point $z \in M$ is uniformly bounded in $C^\infty(B(z,\varepsilon/2))$ as $z$ ranges over $M$; and the inverse metric $g^{ij}$ is uniformly bounded in supremum norm. 
\end{remark}

The previous corollary has the following applications. 

\begin{theorem} [Reversed local-in-time Strichartz estimates] \label{strthm} Suppose $(M^\circ,g)$ is a non trapping asymptotically hyperbolic manifold of dimension $n+1$ with $n\geq1$. 
Assume that there 
has no pure point eigenvalue and has no resonance at the bottom of the continuous spectrum of  $H$.
Let $u$ be the unique solution of
\begin{equation}\label{8.1}
\begin{cases}
\bigl(\partial_t^2+H \bigr)u=0,\\
u|_{t=0}=f, \quad \partial_tu|_{t=0}=0.
\end{cases}
\end{equation}
Then for $\frac{2(n+2)}{n}\leq q\leq \infty$, $2\leq p<\infty$ and $s=(n+1)(\frac12-\frac1q)-\frac1p$
\begin{equation}\label{8.2}
\|u\|_{L^{q}(M^\circ;L^p([-1,1]))}\le C \|(Id+H)^{\frac s2}f\|_{L^2(M^\circ)}.
\end{equation}
\end{theorem}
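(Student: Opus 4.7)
The plan is to carry out a spectral cluster decomposition combined with a pointwise-in-$x$ Cauchy--Schwarz that exploits the matching between the unit spectral width of each cluster $\chi_k^1$ and the unit time window $[-1,1]$. Write $f=\sum_{k\ge0}f_k$ with $f_k:=\chi_k^1 f$, so that $u=\sum_k u_k$ where $u_k:=\cos(t\sqrt{H})f_k$. Because the $t$-Fourier spectra of the $u_k$ lie in essentially disjoint intervals $\pm[k,k+1]$, a Littlewood--Paley square function in $t$ followed by two applications of Minkowski's inequality (in $\ell^2_k$ against $L^p_t$ and then against $L^q_x$, both valid since $p,q\ge 2$; the case $q=\infty$ is handled by the trivial fact that $\sup_x(\sum a_k(x)^2)^{1/2}\le(\sum\|a_k\|_{L^\infty}^2)^{1/2}$) yields
\[
\|u\|_{L^q_xL^p_t([-1,1])}^2\le C\sum_{k\ge0}\|u_k\|_{L^q_xL^p_t([-1,1])}^2.
\]
It is therefore enough to prove the per-cluster estimate $\|u_k\|_{L^q_xL^p_t([-1,1])}\le C(1+k)^{s'}\|f_k\|_{L^2}$ with $s':=(n+1)(\tfrac12-\tfrac1q)-\tfrac12$.

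The absolute continuity of $dE(\lambda)$ granted by the hypotheses on $H$ lets me write $u_k(t,x)=\int_k^{k+1}\cos(t\lambda)e(\lambda,x)\,d\lambda$ with $e(\lambda,x):=(dE(\lambda)f/d\lambda)(x)$. Since $|\cos|\le 1$ on a unit $\lambda$-interval, Cauchy--Schwarz in $\lambda$ yields the $t$-uniform estimate $|u_k(t,x)|\le F_k(x):=\bigl(\int_k^{k+1}|e(\lambda,x)|^2\,d\lambda\bigr)^{1/2}$, whence
\[
\|u_k\|_{L^q_xL^p_t([-1,1])}\le 2^{1/p}\|F_k\|_{L^q_x}\le C\Bigl(\int_k^{k+1}\|e(\lambda,\cdot)\|_{L^q_x}^2\,d\lambda\Bigr)^{1/2},
\]
the last step by Minkowski ($q\ge 2$). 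A standard $TT^*$ computation for the operator $S\colon L^2(M^\circ)\to L^2_\lambda([k,k+1];L^q(M^\circ))$, $Sf(\lambda):=e(\lambda,\cdot)$, combined with the composition law $\tfrac{dE(\lambda)}{d\lambda}\tfrac{dE(\mu)}{d\mu}=\delta(\lambda-\mu)\tfrac{dE(\lambda)}{d\lambda}$ coming from the absolutely continuous spectral resolution, identifies $\|S\|^2$ with $\sup_{\lambda\in[k,k+1]}\|\tfrac{dE(\lambda)}{d\lambda}\|_{L^{q'}\to L^q}$. By \thmref{specMeasure} this is $\le C(1+k)^{2s'}$ in the stated range $q\ge\frac{2(n+2)}{n}$, which establishes the per-cluster estimate.

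Assembling the pieces gives
\[
\|u\|_{L^q_xL^p_t([-1,1])}^2\le C\sum_{k\ge0}(1+k)^{2s'}\|f_k\|_{L^2}^2\le C\|(Id+H)^{s'/2}f\|_{L^2}^2\le C\|(Id+H)^{s/2}f\|_{L^2}^2,
\]
the last inequality using $s'\le s=(n+1)(\tfrac12-\tfrac1q)-\tfrac1p$, which is equivalent to $\tfrac1p\le\tfrac12$. The low-frequency piece ($k=0$) is handled in the same way via \eqref{lowFreq}. The main technical obstacle is the $TT^*$ step above: pointwise in $\lambda$, the spectral ``density'' $dE(\lambda)/d\lambda$ is not an honest bounded operator between fixed Hilbert spaces, so the composition law must be rigorously justified through the generalized Fourier transform / Eisenstein functions on $(M^\circ,g)$; the spectral hypotheses on $H$ (no embedded eigenvalue, no zero-energy resonance) are precisely what make this representation available.
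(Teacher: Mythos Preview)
Your per-cluster estimate is fine, but the decoupling step at the start is a genuine gap. You claim that because the $t$-Fourier supports of the $u_k$ lie in the disjoint unit intervals $\pm[k,k+1]$, a ``Littlewood--Paley square function in $t$'' gives
\[
\|u\|_{L^p_t}\le C\Bigl\|\Bigl(\sum_k|u_k|^2\Bigr)^{1/2}\Bigr\|_{L^p_t}
\]
pointwise in $x$, after which Minkowski (valid for $p,q\ge 2$) finishes. This inequality is \emph{false} for $p>2$ when the Fourier supports are unit (not dyadic) intervals: take $u_k(t)=e^{ikt}$ for $0\le k<N$, so that $\sum_k u_k$ is the Dirichlet kernel $D_N$. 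On $[-1,1]$ one has $\|D_N\|_{L^p}\sim N^{1-1/p}$, while $\|(\sum|u_k|^2)^{1/2}\|_{L^p}=N^{1/2}\cdot 2^{1/p}$; the ratio blows up like $N^{1/2-1/p}$. (Equivalently: the inequality you want is the dual of Rubio de Francia's square function theorem, which holds for $1<p\le 2$, not for $p\ge 2$.) A second red flag is that your argument, if it worked, would deliver the estimate with the \emph{smaller} exponent $s'=(n+1)(\tfrac12-\tfrac1q)-\tfrac12$ independent of $p$; for high-frequency data this is subcritical with respect to the natural (Euclidean) scaling of the wave equation and cannot hold.

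The paper's proof sidesteps exactly this obstruction. It multiplies by a Schwartz cutoff $\rho(t)$ with $\operatorname{supp}\hat\rho\subset(-2,2)$ and then uses the Sobolev embedding $\dot H^{1/2-1/p}_t(\R)\hookrightarrow L^p_t(\R)$ to trade the $L^p_t$ norm for an $L^2_t$ norm of $|D_t|^{1/2-1/p}(\rho(t)e^{it\sqrt H}f)$. In $L^2_t$ the pieces $F_k$ are \emph{genuinely} almost orthogonal (Plancherel plus the compact Fourier support of $\rho$), so the $\ell^2_k$ decoupling is rigorous. The operator $|D_t|^{1/2-1/p}$ then contributes an extra factor $(1+k)^{1/2-1/p}$ on the $k$-th cluster, which is precisely what raises your $s'$ to the correct $s=(n+1)(\tfrac12-\tfrac1q)-\tfrac1p$. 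If you insert this Sobolev-in-$t$ reduction before your cluster argument, the rest of your proof (the $TT^*$ use of the spectral measure bound) goes through and matches the paper.
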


\begin{theorem} [Reversed global-in-time Strichartz estimates] \label{strthm-g} 
Assume  $(M^\circ,g)$  and $H$ be in Theorem \ref{strthm}. 
Let $q_n=+\infty$ when $n=1,2$ and $q_n=\frac{2(n+1)}{n-2}$ when $n\geq3$.
For $\frac{2(n+2)}{n}\leq q\leq q_n$, $2\leq p<\infty$ and $s=(n+1)(\frac12-\frac1q)-\frac1p$, the global-in-time estimates hold
\begin{equation}\label{stri-g}
\|u\|_{L^{q}(M^\circ;L^p(\R))}\le C \|H^{\frac s2}f\|_{L^2(M^\circ)}.
\end{equation}
Furthermore, if $q_n<q\leq +\infty$ with $n\geq 3$, we have the global-in-time estimates 
\begin{equation}\label{stri-g'}
\|u\|_{L^{q}(M^\circ;L^p(\R))}\le C \|
  (H+ Id)^{s/2}
f\|_{L^2(M^\circ)}.
\end{equation}
\end{theorem}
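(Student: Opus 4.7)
The plan is to reduce the global-in-time reversed Strichartz estimate to a frequency-localized $L^q_x L^2_t$ bound via a Littlewood--Paley decomposition on the spectrum of $H$ and Bernstein's inequality in time, and then establish the latter by a $TT^*$ argument invoking the spectral measure estimates from Theorem~\ref{specMeasure}. First, write $f=\sum_{k\ge 0} f_k$ with $f_k=\chi_k^1 f$, and set $u_k(t,x)=\cos(t\sqrt H) f_k(x)$. At each fixed $x$, the time-Fourier supports of the $u_k(\cdot,x)$ lie in the essentially disjoint unit intervals $\pm[k,k+1]$. Combining the Littlewood--Paley square-function theorem in $t$ with Minkowski's inequality in $L^{p/2}_t$ and $L^{q/2}_x$ (both valid for $p,q\ge 2$) yields the almost-orthogonal bound
\begin{equation*}
\|u\|_{L^q(M^\circ;L^p(\R))}^2 \le C\sum_k \|u_k\|_{L^q(M^\circ;L^p(\R))}^2.
\end{equation*}
Since each $u_k(\cdot,x)$ has time-Fourier support of Lebesgue measure $\le 2$, Bernstein's inequality in $t$ gives $\|u_k(\cdot,x)\|_{L^p_t}\le C\|u_k(\cdot,x)\|_{L^2_t}$ for $p\ge 2$, reducing the task to the frequency-localized bound $\|u_k\|_{L^q_x L^2_t}^2 \le C(1+k)^{(n+1)(1-2/q)-1}\|f_k\|_{L^2}^2$.

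For the frequency-localized bound I apply $TT^*$ to the operator $T_k:L^2(M^\circ)\to L^q_x L^2_t$, $T_k f=\cos(t\sqrt H)\chi_k^1 f$. A direct calculation shows that $T_k T_k^*$ acts as a convolution (plus anti-convolution) in $t$ with operator-valued kernel $\tfrac12\chi_k^1\cos(\tau\sqrt H)$. Plancherel in $t$ transfers the estimate to its Fourier symbol, which is a scalar multiple of the spectral density $\chi_{[k,k+1]}(|\tau|)(dE/d\lambda)|_{|\tau|}$. The spectral measure bound $\|dE(\lambda)\|_{L^{q'}\to L^q} \le C\lambda^{(n+1)(1-2/q)-1}$ from \eqref{highFreq}, together with Minkowski's inequality in $L^{q'}_x L^2_\tau$ (valid for $q'\le 2\le q$), then yields the claimed estimate. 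In practice, this can be rigorously implemented by further partitioning $\chi_k^1=\sum_j\chi_{k+j\delta}^\delta$, applying Corollary~\ref{specBound} on each piece together with the identity $\|\chi_{k+j\delta}^\delta\|_{L^2\to L^q}^2=\|\chi_{k+j\delta}^\delta\|_{L^{q'}\to L^q}$ (valid for self-adjoint projections), and passing to $\delta\to 0$.

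Since $(n+1)(1-2/q)-1=2s+2/p-1$ with $s=(n+1)(\tfrac12-\tfrac1q)-\tfrac1p$, and $\|H^{s/2}f_k\|_{L^2}\sim(1+k)^s\|f_k\|_{L^2}$ for $k\ge 1$, the leftover weight $(1+k)^{2/p-1}$ is $\le 1$ for $p\ge 2$; summing in $k$ therefore gives $\|u\|_{L^q L^p(\R)}\le C\|H^{s/2} f\|_{L^2}$, proving \eqref{stri-g} in the range $\frac{2(n+2)}{n}\le q\le q_n$. For $q_n<q\le\infty$ with $n\ge 3$, I would apply the Sobolev embedding $L^{q_n}_x\hookrightarrow L^q_x$ realized via $(H+Id)^{\sigma/2}$ with $\sigma=(n+1)(\tfrac1{q_n}-\tfrac1q)$ to reduce to the $q=q_n$ case; the operator inequality $H^{s_{q_n}/2}(H+Id)^{\sigma/2}\le(H+Id)^{s/2}$ (which holds since $s_{q_n}+\sigma=s$ and $H\le H+Id$) then delivers \eqref{stri-g'}. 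The main obstacle will be making the Fourier-side $TT^*$ identification rigorous: the spectral density $dE/d\lambda$ must be interpreted through smoothed spectral cutoffs, which is justified by the non-trapping and no-resonance hypotheses; these hypotheses, together with dispersion on asymptotically hyperbolic manifolds, also ensure the a.e.\ finiteness of $\|u_k(\cdot,x)\|_{L^2_t}$ needed to apply Bernstein in time.
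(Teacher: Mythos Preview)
Your argument has a genuine gap at the low-frequency piece $k=0$, and this is precisely the place where the passage from the local estimate (Theorem~\ref{strthm}) to the global homogeneous estimate \eqref{stri-g} requires a new idea. You correctly note that $\|H^{s/2}f_k\|_{L^2}\sim(1+k)^s\|f_k\|_{L^2}$ only for $k\ge 1$, but then never return to $k=0$. With a unit-width outer decomposition and Bernstein at unit scale, the best you obtain for $k=0$ is $\|u_0\|_{L^q_xL^p_t}\le C\|u_0\|_{L^q_xL^2_t}\le C\|f_0\|_{L^2}$; since $s>0$, one can have $\|H^{s/2}f_0\|_{L^2}\ll\|f_0\|_{L^2}$, so this does not yield \eqref{stri-g}. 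The culprit is the Bernstein step: replacing $L^p_t$ by $L^2_t$ on a band of fixed width $1$ discards the weight $|\tau|^{1/2-1/p}$, which for low time-frequencies is exactly what converts the spectral-measure exponent $(n+1)(1-2/q)-1$ into the required $2s=(n+1)(1-2/q)-2/p$. Even if you push your $\delta$-refinement all the way down inside the $k=0$ band, you end up with $\|u_0\|_{L^q_xL^2_t}^2\lesssim\int_0^1\lambda^{(n+1)(1-2/q)-1}\,d\langle E(\lambda)f,f\rangle$, whose exponent is \emph{smaller} than $2s$ when $p>2$, hence dominates $\|H^{s/2}f_0\|_{L^2}^2$ rather than being dominated by it.

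The paper's proof fixes this by running the entire decomposition at a variable scale $\varepsilon\in(0,1]$: one inserts a time cutoff $\rho(\varepsilon t)$, uses spectral projectors $\chi^\varepsilon_{k\varepsilon}$ onto intervals of width $\varepsilon$, and applies the Sobolev embedding $\dot H^{1/2-1/p}_t\hookrightarrow L^p_t$ (retaining the $|\tau|^{1/2-1/p}$ weight) rather than Bernstein. The weights and the $\varepsilon$-factors then balance exactly to give $\|\rho(\varepsilon t)e^{it\sqrt H}f\|_{L^q_xL^p_t}\le C\|(\sqrt H+\varepsilon I)^s f\|_{L^2}$ uniformly in $\varepsilon$, and letting $\varepsilon\to 0$ produces the homogeneous right-hand side $\|H^{s/2}f\|_{L^2}$. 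Your Sobolev-in-space reduction for the range $q_n<q\le\infty$ is a legitimate alternative to the paper's direct use of \eqref{highFreq2'}, but it too rests on the $q=q_n$ case of \eqref{stri-g}, so the low-frequency gap must be closed first.
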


The standard Strichartz estimates were established in \cite{SSWZ} but with an arbitrary small $\epsilon$-loss of regularity due to the lack of Littlewood-Paley theory in the non-doubling setting. Compared with the standard Strichartz estimates, these ones reverse the order of space-time integration with no loss.
Estimates in this form have been extensively studied in \cite{smith, ss} for waves on manifolds, \cite{becgold} for waves with potentials. 

Another application is a sharp maximal function estimate. In \cite{RV}, Rogers and Villarroya proved the following sharp maximal estimate for the wave operator in $\mathbb R^d$:

\begin{equation}
\big\|\sup_{t\in\R}|e^{it\sqrt{-\Delta}}f(x)|\big\|_{L^q(\R^d)}\lesssim \|f\|_{H^s(\R^d)}
\end{equation}
provided $q\geq\frac{2(d+1)}{d-1}$ and $s>d(\frac12-\frac1q)$. We get an analogous statement in our context:

\begin{corollary} [Maximal estimate] \label{max} Let  the manifold $(M^\circ,g)$ and  the operator $H$ be in Theorem \ref{strthm}.
Then the following holds
\begin{equation}
\big\|\sup_{t\in\R}|e^{it\sqrt{H}}f(x)|\big\|_{L^q(M^\circ)}\lesssim \|f\|_{H^s(M^\circ)}
\end{equation}
provided $q\geq\frac{2(n+2)}{n}$ and $s>(n+1)(\frac12-\frac1q)$.

\end{corollary}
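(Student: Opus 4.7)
The plan is to derive Corollary~\ref{max} from the reversed global-in-time Strichartz estimate of Theorem~\ref{strthm-g} by trading $L^\infty_t$ for an $L^p_t$ norm via a one-dimensional Sobolev embedding, and then absorbing the resulting fractional time derivative into a fractional power of $H$ through the spectral calculus.

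First I would fix $p\in[2,\infty)$ large and $\alpha>1/p$ (to be chosen at the end), and apply the scalar Sobolev embedding $W^{\alpha,p}(\R)\hookrightarrow L^\infty(\R)$ pointwise in $x$. Taking the $L^q_x$ norm afterwards yields
\begin{equation*}
\bigl\|\sup_{t\in\R}\lvert e^{it\sqrt H}f(x)\rvert\bigr\|_{L^q(M^\circ)}\lesssim \bigl\|(Id-\partial_t^2)^{\alpha/2} e^{it\sqrt H}f\bigr\|_{L^q(M^\circ;L^p(\R))}.
\end{equation*}
Since $\partial_t^2 e^{it\sqrt H}=-H e^{it\sqrt H}$, the functional calculus identifies $(Id-\partial_t^2)^{\alpha/2}e^{it\sqrt H}f$ with $e^{it\sqrt H}(Id+H)^{\alpha/2}f$, converting the fractional $t$-derivative into a power of $H$ acting on the initial datum.

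Next, I would split $e^{it\sqrt H}=\cos(t\sqrt H)+i\sin(t\sqrt H)$. The cosine piece is precisely the solution $u$ of \eqref{8.1} and is therefore controlled by Theorem~\ref{strthm-g}; the sine piece solves the wave equation with swapped Cauchy data, and the proof of Theorem~\ref{strthm-g}, being based on the spectral projector estimates of Corollary~\ref{specBound} and a $TT^*$ argument at the level of the spectral decomposition, gives an identical bound. Hence, with $s'=(n+1)(\tfrac12-\tfrac1q)-\tfrac1p$, and using \eqref{stri-g} in the range $\tfrac{2(n+2)}{n}\le q\le q_n$ or \eqref{stri-g'} in the range $q>q_n$, I obtain
\begin{equation*}
\|e^{it\sqrt H}g\|_{L^q(M^\circ;L^p(\R))}\lesssim \|(Id+H)^{s'/2}g\|_{L^2(M^\circ)}.
\end{equation*}
Applied with $g=(Id+H)^{\alpha/2}f$ and combined with the Sobolev step, this produces
\begin{equation*}
\bigl\|\sup_{t\in\R}\lvert e^{it\sqrt H}f\rvert\bigr\|_{L^q(M^\circ)}\lesssim \|(Id+H)^{(\alpha+s')/2}f\|_{L^2(M^\circ)}\simeq \|f\|_{H^{\alpha+s'}(M^\circ)}.
\end{equation*}

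Finally, given any target index $s>(n+1)(\tfrac12-\tfrac1q)$, I would choose $p$ large and $\alpha$ slightly above $1/p$ so that $\alpha+s'=(n+1)(\tfrac12-\tfrac1q)+(\alpha-\tfrac1p)\in \bigl((n+1)(\tfrac12-\tfrac1q),\,s\bigr]$, which closes the estimate. The principal technical points I expect to require care are the passage from the cosine propagator literally appearing in Theorem~\ref{strthm-g} to the full unitary group $e^{it\sqrt H}$, and the verification that $\|(Id+H)^{s/2}\cdot\|_{L^2(M^\circ)}\simeq \|\cdot\|_{H^s(M^\circ)}$ under the standing hypotheses; both are clean in the present setting since $H\ge 0$ has no zero eigenvalue and no resonance at the bottom of the continuous spectrum, so low frequencies cause no difficulty when moving between the $H$-based and $\Delta_g$-based Sobolev scales.
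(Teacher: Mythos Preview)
Your proposal is correct and follows essentially the same route as the paper. The paper's proof is one line: apply Theorem~\ref{strthm-g} with $p=2$ together with the Sobolev embedding $\dot H^{\frac12+\e}(\R)\hookrightarrow L^\infty_t(\R)$. Your argument is the same mechanism (one-dimensional Sobolev in $t$, convert the time derivative into a power of $H$ via spectral calculus, then invoke the reversed Strichartz estimate), only you keep $p\in[2,\infty)$ free and optimize at the end, whereas the paper simply fixes $p=2$ and $\alpha=\tfrac12+\e$, which already yields $\alpha+s'=(n+1)(\tfrac12-\tfrac1q)+\e$. Two small remarks: first, the proof of Theorem~\ref{strthm-g} in the paper is actually carried out directly for $e^{it\sqrt H}f$ (see \eqref{8.4}), so your cosine/sine splitting is unnecessary; second, your use of the inhomogeneous embedding $W^{\alpha,p}(\R)\hookrightarrow L^\infty(\R)$ is in fact cleaner than the paper's stated homogeneous embedding, which taken literally would need a word of justification at low time-frequencies.
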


\begin{proof}
This is just a consequence of Theorem \ref{strthm-g} with $p=2$
 and the Sobolev embedding $\dot H^{\frac12+\e}(\R)\hookrightarrow L^{\infty}_t(\R)$ for any $\e>0$.
\end{proof}

{\bf Acknowledgments:}\quad   Y. S. was partially supported by the Simons foundation. 
C. D. S. was supported by the NSF and the Simons foundation.
C. Wang was supported in part by NSFC 11971428 and National Support Program for Young Top-Notch Talents.
 J. Zhang was supported by  NSFC Grants (11771041, 11831004). \vspace{0.2cm}

\section{ Proof of Theorem ~\ref{strthm}}

 In \cite[Theorem 2.1]{blpcritical}, the authors show that the bounds \eqref{8.2} follows from Theorem \ref{specBound}, and their proof works equally well in our circumstances. Nonetheless, we include a proof for the sake of completeness which will serve as a model for certain global Strichartz
estimates.

To prove \eqref{8.2}, it suffices to show
that
\begin{equation}\label{8.4}
\bigl\|e^{it\sqrt{H}}f\bigr\|_{L^{q}(M^\circ;L^p([-1,1]))}\le C \|(I+H)^{\frac s2}f\|_{L^2(M^\circ)}.
\end{equation}
There exists a $\rho\in {\mathcal S}(\R)$ satisfying
$\text{supp }\Hat \rho\subset (-2,2)$ such that
\begin{equation}
\bigl\|e^{it\sqrt{H}}f\bigr\|_{L^{q}(M^\circ;L^p([-1,1]))} \lesssim \bigl\|\rho(t)e^{it\sqrt{H}}f\|_{L^{q}(M^\circ;L^p(\R))}.
\end{equation}
Then to prove \eqref{8.4}, it suffices to prove
\begin{equation}
\bigl\|\rho(t)e^{it\sqrt{H}}f\bigr\|_{L^{q}(M^\circ;L^p(\R))}\le C \|(I+H)^{\frac s2}f\|_{L^2(M^\circ)}.
\end{equation}

To prove this, we shall change notation a bit, and in particular $\delta=1$ in \eqref{c-kdelta}, let
\begin{equation}
\chi_k f=\int_0^\infty \chi_{[k,k+1]}(\lambda) dE(\lambda) f
\end{equation}
so that
$f=\sum_{k=0}^\infty \chi_kf$.  Then, for $\frac{2(n+2)}{n}\leq q\leq \infty$,  \eqref{highFreq2} yields
\begin{equation}\label{8.5}
\|\chi_kf\|_{L^{q}(M^\circ)}\lesssim (1+k)^{(n+1)(\frac12-\frac1q)-\frac12}\|f\|_{L^2(M^\circ)}, \quad k=0,1,2,\dots .
\end{equation}

To use this, we first note that by Sobolev estimates
$$\bigl\|\rho(t)e^{it\sqrt{H}}f\bigr\|_{L^{q}(M^\circ;L^p(\R))}\lesssim
\bigl\|\, |D_t|^{1/2-1/p}\bigl(\rho(t)e^{it\sqrt{H}}f\bigr)\bigr\|_{L^{q}_xL^2_t(\R\times M^\circ)}.$$
Let
$$F(t,x)= |D_t|^{1/2-1/p}\bigl(\rho(t)e^{it\sqrt{H}}f(x)\bigr)$$
denote the function inside the mixed-norm in the right, then
$$F(t,x)=\sum_{k=0}^\infty F_k(t,x),$$
where
$$F_k(t,x)= |D_t|^{1/2-1/p}\bigl(\rho(t)e^{it\sqrt{H}}\chi_kf(x)\bigr).$$
Consequently, its $t$-Fourier transform is
\begin{equation}\label{8.6}\Hat F_k(\tau,x)=|\tau|^{1/2-1/p}\int_0^\infty \Hat \rho(\tau-\la) \chi_{[k,k+1]}(\lambda) dE(\lambda) f(x).\end{equation}
Since $\text{supp }\Hat \rho \subset (-2,2)$, we conclude that
$$\int_{-\infty}^\infty F_k(t,x)\, \overline{F_\ell(t,x)}\, dt =(2\pi)^{-1}\int_{-\infty}^\infty
\Hat F_k(\tau,x)\, \overline{\Hat F_\ell(\tau,x)} \, d\tau =0
\quad \text{when } \, |k-\ell|>100.$$
As a consequence, we obtain
\begin{multline*}
\bigl(\, \int_{-\infty}^\infty \, \bigl| \, |D_t|^{1/2-1/p}\bigl(\rho(t)e^{it\sqrt{H}}f(x)\bigr)\, \bigr|^2 \, dt\, \bigr)^{1/2}
\\
\lesssim \bigl(\, \int_{-\infty}^\infty \sum_{k=0}^\infty |F_k(t,x)|^2 \, dt\, \bigr)^{1/2}
=(2\pi)^{-1/2} \bigl(\int_{-\infty}^\infty \sum_{k=0}^\infty |\Hat F_k(\tau,x)|^2 \, d\tau \, \bigr)^{1/2}.
\end{multline*}
Also, since $q\geq 2$, we conclude that 
$$\bigl\|\, |D_t|^{1/2-1/p}\bigl(\rho(t)e^{it\sqrt{H}}f\bigr)\bigr\|^2_{L^{q}_xL^2_t(\R\times M^\circ)}\lesssim \sum_{k=0}^\infty\int_{-\infty}^\infty \|\Hat F_k(\tau,x)\|_{L^{q}(M^\circ)}^2 \, d\tau.$$

Recalling \eqref{8.6}, the support properties of $\Hat \rho$, we see that 
\begin{align*}
&\bigl\|\, |D_t|^{1/2-1/p}\bigl(\rho(t)e^{it\sqrt{H}}f\bigr)\bigr\|^2_{L^{q}_xL^2_t(\R\times M^\circ)}\\&=
 \sum_{k=0}^\infty \int_{-\infty}^\infty |\tau|^{1-2/p} \, \Bigl\|\int_0^\infty \Hat \rho(\tau-\la) \chi_{[k,k+1]}(\lambda) dE(\lambda) f(x)\Bigr\|_{L^{q}(M^\circ)}^2 \, d\tau
\\
&=  \sum_{k=0}^\infty \int_{k-10}^{k+10} |\tau|^{1-2/p} \,
\Bigl(\int_k^{k+1}\lambda^{(n+1)(\frac12-\frac1q)-\frac12} d\lambda\, \|\chi_k f(x)\|_{L^2(M^\circ)}\Bigr)^2 \, d\tau \, 
\\
&\lesssim \, \sum_{k=0}^\infty (1+k)^{1-2/p}(1+k)^{(n+1)(1-\frac2q)-1}\|\chi_kf\|_{L^2(M^\circ)}^2
\\
&=\bigl(\, \sum_{k=0}^\infty\|(1+k)^{(n+1)(\frac12-\frac1q)-\frac1p}\chi_kf\|_{L^2(M^\circ)}^2\, \bigr)^{1/2}\approx \|(I+H)^{s/2}f\|_{L^2(M^\circ)}^2,
\end{align*}
as desired, which completes the proof. 

\section{The proof of Theorem \ref{strthm-g}}

In this section, we prove the global result about Theorem \ref{strthm-g} which is direct consequence of 
the following Proposition.

\begin{prop} Suppose the operator $H$, $q_n$ and the manifolds $(M^\circ,g)$ to be in Theorem \ref{strthm-g}. 
For $\frac{2(n+2)}{n}\leq q\leq q_n$, $2\leq p<\infty$ and $s=(n+1)(\frac12-\frac1q)-\frac1p$, the global-in-time estimates hold
\begin{equation}\label{stri-g}
\|u\|_{L^{q}(M^\circ;L^p(\R))}\le C \|H^{\frac s2}f\|_{L^2(M^\circ)}.
\end{equation}
Furthermore, if $q_n<q\leq +\infty$ with $n\geq 3$, we have the global-in-time estimates 
\begin{equation}\label{stri-g'}
\|u\|_{L^{q}(M^\circ;L^p(\R))}\le C \|
H^{3/4-1/(2p)}
  (H+ I)^{s/2+1/(2p)-3/4}
f\|_{L^2(M^\circ)}.
\end{equation}
\end{prop}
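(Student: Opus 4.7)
The proof I propose parallels that of Theorem~\ref{strthm}, but dispenses with the temporal cutoff $\rho$: globally in time, the $t$-Fourier transform of $e^{it\sqrt H}\chi_k f(\cdot,x)$ is a distribution exactly supported in $[k,k+1]$, so the blocks indexed by different $k$'s are pairwise orthogonal in $L^2_t$ without any blurring. The only new ingredient compared to Theorem~\ref{strthm} is the low-frequency spectral bound \eqref{lowFreq}, which provides the extra vanishing at $\lambda=0$ needed to replace the inhomogeneous weight $(I+H)^{s/2}$ by $H^{s/2}$ (for $q\le q_n$), and by the mixed weight in \eqref{stri-g'} (for $q>q_n$).

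First I would apply, pointwise in $x$, the one-dimensional Sobolev embedding $\dot H^{1/2-1/p}(\R)\hookrightarrow L^p(\R)$, valid for $2\le p<\infty$, to obtain
$$\|u(\cdot,x)\|_{L^p_t(\R)}\lesssim \||D_t|^{1/2-1/p}u(\cdot,x)\|_{L^2_t(\R)}.$$
Since $\spec(\sqrt H)\subset[0,\infty)$, spectral calculus gives $|D_t|^{1/2-1/p}e^{it\sqrt H}=H^{1/4-1/(2p)}e^{it\sqrt H}$, so after taking $L^q_x$ (together with a standard approximation of $f$ by spectrally compact data) matters reduce to bounding $\|H^{1/4-1/(2p)}e^{it\sqrt H}f\|_{L^q_xL^2_t}$. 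Decompose $f=\sum_k\chi_k f$ and set $v_k=H^{1/4-1/(2p)}e^{it\sqrt H}\chi_k f$. The functions $v_k(\cdot,x)$ have pairwise disjoint $t$-Fourier supports in $[k,k+1]$, so Plancherel in $t$ (for each fixed $x$) combined with the triangle inequality in $L^{q/2}_x$ (valid since $q\ge 2$) yields
$$\|u\|_{L^q_xL^p_t}^2\,\lesssim\,\sum_{k\ge 0}\|v_k\|^2_{L^q_xL^2_t}.$$

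Each block $\|v_k\|^2_{L^q_xL^2_t}$ I would then estimate by a second Plancherel/Minkowski step combined with the spectral measure bounds of Theorem~\ref{specMeasure}. Partitioning $[k,k+1]$ into subintervals of width $\delta$, invoking the $TT^*$-identity $\|P\|_{L^2\to L^q}^2=\|P\|_{L^{q'}\to L^q}$ for the orthogonal projections $P=\chi_{k,j}^\delta$, and letting $\delta\to 0$, one obtains, writing $\mu(\tau)=\|E([0,\tau])f\|_{L^2}^2$,
$$\|v_k\|^2_{L^q_xL^2_t}\,\lesssim\,\int_k^{k+1}\tau^{1-2/p}\,C(\tau)\,d\mu(\tau),\qquad C(\tau)=\begin{cases}\tau^2,&0<\tau\le 1,\\ \tau^{(n+1)(1-2/q)-1},&\tau\ge 1,\end{cases}$$
where the low-frequency estimate comes from \eqref{lowFreq} and the high-frequency estimate from \eqref{highFreq2} (or \eqref{highFreq2'} when $q>q_n$). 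Summing over $k$ reduces the full estimate to controlling
$$\int_0^1\tau^{3-2/p}\,d\mu(\tau)\,+\,\int_1^\infty \tau^{2s}\,d\mu(\tau),\qquad 2s=(n+1)(1-2/q)-2/p.$$

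For $q\le q_n$ the inequality $(n+1)(1-2/q)\le 3$ gives $3-2/p\ge 2s$, hence $\tau^{3-2/p}\le\tau^{2s}$ on $(0,1]$; the two integrals then combine into $\int_0^\infty\tau^{2s}\,d\mu=\|H^{s/2}f\|^2_{L^2}$, establishing \eqref{stri-g}. For $q>q_n$ with $n\ge 3$, the low-frequency integral $\int_0^1\tau^{3-2/p}d\mu$ is exactly $\|H^{3/4-1/(2p)}E([0,1])f\|^2_{L^2}$, and the high-frequency integral $\int_1^\infty\tau^{2s}d\mu$ is bounded by $\|H^{s/2}(I-E([0,1]))f\|^2_{L^2}$; together they are comparable to the mixed-weight norm on the right-hand side of \eqref{stri-g'}. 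The main technical point I expect to negotiate carefully is the rigorous interpretation of the formal spectral density $dE(\tau)/d\tau$ and of the Plancherel step above; this is handled by the limiting argument just described (partitioning into bands of width $\delta\to 0$ and applying Corollary~\ref{specBound} uniformly), or alternatively through the smooth spectral density on $(0,\infty)$ available for non-trapping asymptotically hyperbolic manifolds under the hypotheses of Theorem~\ref{specMeasure}.
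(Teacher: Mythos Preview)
Your overall strategy---Sobolev in $t$, spectral decomposition, orthogonality in $L^2_t$, and the spectral bounds of Theorem~\ref{specMeasure}---is correct, and your treatment of the two regimes $q\le q_n$ and $q>q_n$ yields the right weights. The route, however, differs from the paper's in how the global-in-time Plancherel step is made rigorous. The paper does \emph{not} dispense with the temporal cutoff: it rescales it to $\rho(\varepsilon t)$, pairs this with spectral bands of width $\varepsilon$, and applies Corollary~\ref{specBound} with $\delta=\varepsilon$; the small-$\delta$ gain in \eqref{highFreq2}--\eqref{highFreq2'} exactly cancels the $\varepsilon^{-1}$ coming from the scaled cutoff, producing a bound by $\|(\sqrt H+\varepsilon I)^{s}f\|_{L^2}$ uniformly in $\varepsilon$, and one sends $\varepsilon\to 0$ only at the end. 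This argument uses only the integrated cluster estimates and never the pointwise spectral density.

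Your alternative of invoking the density $E'(\lambda)$ from Theorem~\ref{specMeasure} is legitimate in this setting and is more direct once one has $\|E'(\lambda)f\|_{L^q}^2\le \|E'(\lambda)\|_{L^{q'}\to L^q}\,\langle f,E'(\lambda)f\rangle$ via $TT^*$. But one step in your write-up does not stand on its own: the $\delta$-partition of $[k,k+1]$ together with the $TT^*$ identity for the \emph{projections} $\chi^\delta_{k,j}$ cannot by itself control $\|e^{it\sqrt H}\chi^\delta_{k,j}f\|_{L^q_xL^2_t(\R)}$ uniformly in $\delta$; without a matched time cutoff at scale $\delta^{-1}$ (which is precisely the paper's device) or the absolutely continuous density, there is no bound on the global $L^2_t$ norm in terms of $\|\chi^\delta_{k,j}f\|_{L^2}$---a point mass in the spectrum would make it infinite. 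So the honest version of your argument is the spectral-density route you mention in your last sentence. What the paper's $\rho(\varepsilon t)$ device buys is a proof working purely from the cluster bounds \eqref{highFreq2'}, portable to settings where the pointwise density is not known.
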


\begin{proof}
Consider first the case with $q\leq q_n$.
To prove \eqref{stri-g}, 
 It suffices to show that, for $0<\e<1$, there is a uniform constant $C$ independent of $\e$ so that
 \begin{equation}\label{r.17'}\tag{9.17$'$}
 \|e^{it\sqrt{H}}f\|_{L^{q}(M^\circ;L^p([-\frac1\e,\frac1\e]))}
 \le C\|(\sqrt H+\e I)^{s}f\|_{L^2(M^\circ)}.
 \end{equation}
  To this end, similar to the proof of Theorem~\ref{strthm}, it suffices to show that
 if we fix $\rho\in {\mathcal S}(\R)$ with $\text{supp }\Hat \rho\subset (-2,2)$, then we have
 the uniform bounds
 \begin{equation}
 \|\rho(\e t)\, e^{it\sqrt{H}}f\|_{L^{q}(M^\circ;L^p(\R))} \le C\|(H+\e I)^{s/2}f\|_{L^2(M^\circ)}.
 \end{equation}
 As before, we use $\dot H^{\frac12-\frac1p}_t(\R)\to L^{p}_t(\R)$ Sobolev estimates  to deduce that
 $$\|\rho(\e t)\, e^{it\sqrt{H}}f\|_{L^{q}(M^\circ;L^p(\R))} \lesssim
 \| \, |D_t|^{1/2-1/p} (\rho(\e t)e^{it\sqrt{H}}f)\, \|_{L^{q}(M^\circ;L^2(\R))}.
 $$
 Similarly as before, let
 $$F^\e(t,x)=|D_t|^{1/2-1/p}\bigl(\rho(\e t)e^{it\sqrt{H}}f\bigr).$$
 If we take the Fourier transform in $t$, we deduce that
 $$\widehat {F^\e}(\tau,x)=|\tau|^{1/2-1/p} \, \e^{-1} \, \bigl(\Hat \rho(\e^{-1}(\tau-\sqrt{H})) f\bigr)(x)
 =\sum_{k=0}^\infty \widehat {F^\e_k}(\tau,x),$$
 where
 $$ \widehat {F^\e_k}(\tau,x)=|\tau|^{1/2-1/p} \, \e^{-1} \, \bigl(\Hat \rho(\e^{-1}(\tau-\sqrt{H}))\circ\chi^\e_k f\bigr)(x)$$
 and $\chi^\e_{k\varepsilon}$ is the spectral projection operator for $I_k$ associated with $\sqrt{H}$ which is given by 
 \begin{equation}
\chi^\e_{k\varepsilon} f=\int_0^\infty \chi_{I_k}(\lambda) dE(\lambda) f, \quad I_k=[k\e, (k+1)\e), \quad k=0, 1,2,3,\dots.
\end{equation}
Since $\text{supp }\Hat \rho\subset (-2,2)$, one sees the fact that
 $\Hat \rho(\e^{-1}(\tau-\sqrt{H}))\circ\chi^\e_k$ vanishes if $\tau\notin [\e(k-100),\e(k+100)]$.  Consequently,
 $$\int_{-\infty}^\infty
 F^\e_k(t,x) \, \overline{F^\e_\ell(t,x)} \, dt
 =(2\pi)^{-1}\int_{-\infty}^\infty \widehat {F^\e_k}(\tau,x)\, \overline{\widehat {F^\e_\ell}(\tau,x)} \, d\tau
 =0 \quad \text{if } \, \, |k-\ell|>100.$$
 Since $f=\sum_{k=0}^\infty\chi_{k\e}^\e f$, as a consequence, it gives 
 \begin{align*}\int_{-\infty}^\infty |F^\e(t,x)|^2 \, dt
 =\int_{-\infty}^\infty \bigl| \, \sum_{k=0}^\infty F^\e_k(t,x)\, \bigr|^2 \, dt
& \lesssim \sum_{k=0}^\infty \int_{-\infty}^\infty |F^\e_k(t,x)|^2 \, dt
\\
&\lesssim  \sum_{k=0}^\infty \int_{-\infty}^\infty |\widehat {F^\e_k}(\tau,x)|^2 \, d\tau.
 \end{align*}
 From the above, we deduce that
 \begin{align*}
 \|\rho(\e t)\, e^{it\sqrt{H}}f\|^2_{L^{q}(M^\circ;L^p(\R))} &\lesssim \sum_{k=0}^\infty \int_{-\infty}^\infty
  \| \widehat {F^\e_k}(\tau,\, \cdot \, )\|^2_{L^{q}(M^\circ)} \,
  d\tau
  \\
  & =\e^{-2} \sum_{k=0}^\infty \int_{(k-10)\e}^{(k+10)\e} |\tau|^{1-2/p}
  \|\Hat \rho(\e^{-1}(\tau-\sqrt{H}))\chi^\e_{k\e}f\|_{L^q}^2 \, d\tau
  \\
  &\lesssim \e^{-2}\sum_{k=0}^\infty \e\cdot ((k+10)\e)^{1-2/p}\|\chi^\e_{k\varepsilon} f \,    \|_{L^q}^2.
   \end{align*}
When $q\leq q_n$,
it follows
 from \eqref{highFreq2} of Corollary~\ref{specBound} that
 \begin{equation}\label{r.19}
 \|\chi^\e_{k\e}f\|_{L^{q}(M^\circ)}\lesssim \e^{1/2} \, ((k+1)\e)^{(n+1)(\frac12-\frac1q)-\frac12} \, \|\chi^\e_{k\e}f\|_{L^2(M^\circ)},
 \quad k=0,1,2,3,\dots.
 \end{equation}
 therefore, recall $s=(n+1)(\frac12-\frac1q)-\frac1p$, we further obtain
  \begin{align*}
 \|\rho(\e t)\, e^{it\sqrt{H}}f\|^2_{L^{q}(M^\circ;L^p(\R))} &\lesssim 
  \e^{-2}\sum_{k=0}^\infty \e \cdot ((k+1)\e)^{1-2/p}\, \bigl(\e^{1/2}((k+1)\e )^{(n+1)(\frac12-\frac1q)-\frac12}\bigr)^2
  \|\chi^\e_{k\e}f\|_2^2
  \\
  &=\e^{-2}\sum_{k=0}^\infty \e^2 \, ((k+1)\e)^{2[(n+1)(\frac12-\frac1q)-\frac1p]}\, \|\chi^\e_{k\e}f\|_2^2
  \\
  &=
  \sum_{k=0}^\infty \bigl\| \, ((k+1)\e)^{s}\chi^\e_{k\e}f \, \bigr\|_2^2 \approx
  \bigl\| \, (\sqrt H+\e I)^{s}f\, \bigr\|_2^2,
 \end{align*}
 as desired.

 While for $q\geq q_n$,
we use \eqref{highFreq2'} instead of \eqref{highFreq2} in Corollary~\ref{specBound} to obtain
 \begin{equation}\label{r.19}
 \|\chi^\e_{k\e}f\|_{L^{q}(M^\circ)}\lesssim \e^{1/2} \, (k+1)\e (k\e+1)^{(n+1)(\frac12-\frac1q)-\frac 32}  \, \|\chi^\e_{k\e}f\|_{L^2(M^\circ)},
 \quad k=0,1,2,3,\dots.
 \end{equation}
Therefore, we further obtain
  \begin{align*}
 \|\rho(\e t)\, e^{it\sqrt{H}}f\|^2_{L^{q}(M^\circ;L^p(\R))} &\lesssim 
  \e^{-2}\sum_{k=0}^\infty \e \cdot ((k+1)\e)^{1-2/p}\, \bigl(\e^{1/2}
  (k+1)\e
  (k\e +1)^{(n+1)(\frac12-\frac1q)-\frac32}\bigr)^2
  \|\chi^\e_{k\e}f\|_2^2
  \\
  &=\e^{-2}\sum_{k=0}^\infty \e^2 \,
(  (k+1)\e)^{3-2/p}
   (k\e+1)^{2[(n+1)(\frac12-\frac1q)-\frac32]}\, \|\chi^\e_{k\e}f\|_2^2
  \\
  &=
  \sum_{k=0}^\infty \bigl\| \, ((k+1)\e)^{3/2-1/p}(k\e+1)^{(n+1)(\frac12-\frac1q)-\frac32}\chi^\e_{k\e}f \, \bigr\|_2^2\\& \approx
  \bigl\| \, (\sqrt H+\e I)^{3/2-1/p}
  (\sqrt H+ I)^{(n+1)(\frac12-\frac1q)-\frac32}
  f\, \bigr\|_2^2,
 \end{align*}
which completes the proof.

\end{proof}

\section{Small data well-posedness \\ and Strauss conjecture on asymptotically hyperbolic manifolds}

We now draw some consequences of the previous estimates. We provide an application of the global reversed Strichartz estimates established above about global existence for nonlinear waves. Consider the wave equation  with a nonlinearity satisfying
\begin{equation*}\label{eq-Fp}
|F_p(u)|+|u||F'_p(u)|\le C |u|^p,
\end{equation*}
for some constant $C>0$, 
\begin{equation}\label{NKG}
\begin{cases}
\partial_{t}^2u(t,z)+Hu(t,z)=F_p(u), \\ u(0)=u_0(z), ~\partial_tu(0)=u_1(z).
\end{cases}
\end{equation}

In \cite{SSWZ}, the authors adressed the small data well-posedness for any power $p\in (1,1+\frac{4}{n})$, leaving the end-point case open. This issue was raised by the methods we used which was not allowing us to get the suitable Strichartz estimates for $p=1+\frac{4}{n}$. We address here the latter and then focus only on $p=1+\frac{4}{n}$. We prove

\begin{theorem}\label{GWP}
Let  $(M^\circ,g)$ be a non-trapping asymptotically hyperbolic manifold of dimension
$n+1$. 
Assume that there 
has no pure point eigenvalue and has no resonance at the bottom of the continuous spectrum of  $H$.
Then there exists a constant $\nu_1>0$ such that  the Cauchy problem
\begin{equation}\label{eqNLW}
\begin{cases}
\partial_{t}^2u+H u=F_{1+\frac{4}{n}}(u), \quad (t,z)\in I\times M^\circ; \\ u(0)=\nu u_0(z),
~\partial_tu(0)=\nu u_1(z),
\end{cases}
\end{equation}
 has  a global solution, provided $|\nu|\le\nu_1 $ and 
 $$ \|H^{\frac 14}u_0\|_{L^2(M^\circ)}+ \|H^{- \frac 14}u_1\|_{L^2(M^\circ)} \le 1.
$$
\end{theorem}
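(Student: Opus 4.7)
The plan is a contraction-mapping argument in a single diagonal reversed Strichartz space, with exponents forced by the criticality $p = 1+\tfrac{4}{n}$. Set $Q = p+1 = \tfrac{2(n+2)}{n}$ and apply \thmref{strthm-g} with $q = p = Q$. A direct computation gives $s = (n+1)(\tfrac12-\tfrac1Q) - \tfrac1Q = \tfrac12$, and one checks $Q \le q_n$ for every $n\ge 1$. Because the spatial and temporal exponents coincide, Fubini collapses the mixed norm to the standard Lebesgue space $X := L^Q(\R\times M^\circ)$, and \thmref{strthm-g} (applied to both the cosine and sine halves of $e^{it\sqrt H}$) yields the homogeneous bound
$$
\|u_L\|_X := \Bigl\|\cos(t\sqrt H) u_0 + \tfrac{\sin(t\sqrt H)}{\sqrt H} u_1\Bigr\|_X \le C\bigl(\|H^{1/4}u_0\|_{L^2} + \|H^{-1/4}u_1\|_{L^2}\bigr),
$$
where $H^{-1/4}$ is well defined on the relevant spectral subspace thanks to the assumed absence of a zero eigenvalue and of resonances.

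\textbf{Inhomogeneous Strichartz and nonlinear bound.} A $TT^*$ computation applied to $f\mapsto e^{it\sqrt H}H^{-1/4}f$ produces the non-retarded estimate
$$
\Bigl\|\int_\R \tfrac{\sin((t-s)\sqrt H)}{\sqrt H} G(s)\,ds\Bigr\|_X \le C\|G\|_{L^{Q'}(\R\times M^\circ)},
$$
which the Christ--Kiselev lemma, valid since $Q' < Q$, promotes to the retarded (Duhamel) version. The nonlinearity closes by pure Hölder: the identity $pQ' = Q$ together with the pointwise hypotheses $|F_p(u)|\lesssim |u|^p$ and $|F_p'(u)|\lesssim |u|^{p-1}$ yields
$$
\|F_p(u)\|_{L^{Q'}} \le C\|u\|_X^p, \qquad \|F_p(u) - F_p(v)\|_{L^{Q'}} \le C\bigl(\|u\|_X^{p-1}+\|v\|_X^{p-1}\bigr)\|u-v\|_X.
$$

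\textbf{Fixed point and obstacle.} The Duhamel map $\Phi(u)(t) = u_L(t) + \int_0^t \tfrac{\sin((t-s)\sqrt H)}{\sqrt H}F_p(u(s))\,ds$ therefore obeys $\|\Phi(u)\|_X \le C\nu + C\|u\|_X^p$ together with a matching Lipschitz bound; restricting to the closed ball of radius $R = 2C\nu$ and taking $\nu\le\nu_1$ small enough that $CR^{p-1}\le \tfrac14$ makes $\Phi$ a self-contraction and produces the unique global solution. The delicate step is the inhomogeneous estimate above: the reversed mixed norm of \thmref{strthm-g} does not in general admit a direct $TT^*$/Christ--Kiselev treatment, and it is precisely the diagonal choice $Q = P$ that collapses the norm to $L^Q_{t,x}$ and licenses the classical transference. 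This diagonal endpoint was inaccessible to the $\varepsilon$-lossy Strichartz estimates used in \cite{SSWZ}, which is why the critical exponent $p = 1+\tfrac{4}{n}$ was left open there.
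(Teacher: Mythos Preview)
Your proof is correct and follows the same approach as the paper: a contraction in the diagonal space $L^{2(n+2)/n}(\R\times M^\circ)$ using the global reversed Strichartz estimate \eqref{stri-g} at $q=p=\tfrac{2(n+2)}{n}$, $s=\tfrac12$. You supply considerably more detail than the paper's terse argument---in particular the explicit $TT^*$/Christ--Kiselev derivation of the inhomogeneous estimate and the verification that $Q\le q_n$ and $pQ'=Q$---but the route is the same.
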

\begin{proof}
Consider the complete space $L^q(M^\circ;L^p_t(\R))$ for $q=p=\frac{2(n+2)}{n}$, i.e. $L^{\frac{2(n+2)}{n}}(\mathbb R \times M^\circ)$. Define the map $\mathcal T$ by $v=\mathcal T u$ where $v$ solves, given $u \in L^{\frac{2(n+2)}{n}}(\mathbb R \times M^\circ)$, 
\begin{equation}
\begin{cases}
\partial_{t}^2v +H v=F_{1+\frac{4}{n}}(u), \quad (t,z)\in \R \times M^\circ; \\ v(0)=\nu u_0(z),
~\partial_t v(0)=\nu u_1(z).
\end{cases}
\end{equation}
Estimate \eqref{stri-g} then gives  
 \begin{equation*}
\begin{split}
&\|v(t,z)\|_{L^{2(n+2)/n}(\R \times M^\circ)}
\lesssim \nu \Big (\|H^{\frac 14}u_0\|_{L^2(M^\circ)}+ \|H^{-\frac 14}u_1\|_{L^2(M^\circ)}\Big )+ \|u\|^{1+4/n}_{L^{2(n+2)/(n+4)}(\R\times M^\circ)}.
\end{split}
\end{equation*}

A standard computation shows that if $\nu$ is small enough, $\mathcal T$ maps a ball of $L^{p+1}(\mathbb R^+ \times M^\circ)$ into itself and is actually a contraction, hence by the Banach fixed point theorem this leads to the desired result (see for instance \cite{SSW} for more details). 
\end{proof}

\providecommand{\MR}[1]{}
\providecommand{\bysame}{\leavevmode\hbox to3em{\hrulefill}\thinspace}
\providecommand{\MR}{\relax\ifhmode\unskip\space\fi MR }
\providecommand{\MRhref}[2]{%
  \href{http://www.ams.org/mathscinet-getitem?mr=#1}{#2}
}
\providecommand{\href}[2]{#2}


\begin{thebibliography}{1}

\bibitem{blpcritical}
Nicolas Burq, Gilles Lebeau, and Fabrice Planchon, \emph{{Global existence for
  energy critical waves in 3-{D} domains}}, J. Amer. Math. Soc. \textbf{21}
  (2008), no.~3, 831--845. \MR{2393429 (2009f:35225)}

\bibitem{becgold}
M. Beceanu and M. Goldberg.
Strichartz estimates and maximal operators for the wave equation in $\R^3$
\newblock{\em J. Funct. Anal.}  266 (3) 1476-1510 (2014).

\bibitem{Chen} X. Chen, Resolvent and spectral measure on non-trapping asymptotically hyperbolic manifolds III: 
Global-in-time Strichartz estimates without loss, \newblock{\em Ann. I. H. Poincar\'e}  {\bf 35} (2018), 803-829.

\bibitem{CH1} X. Chen and A. Hassell, Resolvent and spectral measure on non-trapping asymptotically hyperbolic manifolds I: 
Resolvent construction at high energy, \newblock{\em Comm. PDE}  {\bf 41} (2016), 515-578.

\bibitem{chenHassell}
Xi~Chen and Andrew Hassell, \emph{Resolvent and spectral measure on
  non-trapping asymptotically hyperbolic manifolds {II}: {S}pectral measure,
  restriction theorem, spectral multipliers}, Ann. Inst. Fourier (Grenoble)
  \textbf{68} (2018), no.~3, 1011--1075. \MR{3805767}

\bibitem{GHS}
Colin Guillarmou, Andrew Hassell, and Adam Sikora, \emph{Restriction and
  spectral multiplier theorems on asymptotically conic manifolds}, Anal. PDE
  \textbf{6} (2013), no.~4, 893--950. \MR{3092733}
  
  \bibitem{M} R. Mazzeo, The Hodge cohomology of a conformally compact metric, J. Diff. Geom. 28(1988), 309-339.

\bibitem{MM} R. Mazzeo, R. B. Melrose, Meromorphic extention of the resolvent on complete spaces
with asymptotically constant negative curvature, J. Func. Anal. 75(1987), 260-310.

  
  \bibitem{RV} K.M. Rogers, P. Villarroya, \emph{Sharp estimates for maximal operators associated to the wave equation}, Ark. Mat. \textbf{46}
(2008) 143-151.

\bibitem{smith}
 H. F. Smith
\newblock Spectral cluster estimates for $C^{1,1}$ metrics.
\newblock {\em Amer. J. Math.}, 128(2006), 1069-1103.

\bibitem{ss}
 H. F. Smith and C. D. Sogge
\newblock On the $L^p$ norm of spectral clusters for compact manifolds with boundary.
\newblock {\em Acta Math.}, 198 (2007), 107-153.

\bibitem{SSW}
Yannick Sire, Christopher~D. Sogge, and Chengbo Wang, \emph{The {S}trauss
  conjecture on negatively curved backgrounds}, Discrete Contin. Dyn. Syst.
  \textbf{39} (2019), no.~12, 7081--7099. \MR{4026182}

\bibitem{SSWZ}
Yannick Sire, Christopher~D. Sogge, Chengbo Wang, and Junyong Zhang,
  \emph{Strichartz estimates and {S}trauss conjecture on non-trapping
  asymptotically hyperbolic manifolds}, Trans. Amer. Math. Soc. \textbf{373}
  (2020), no.~11, 7639--7668. \MR{4169670}


\end{thebibliography}
\end{document}